\theoremstyle{plain}
\newtheorem{thm}{Theorem}[section]
\newtheorem{lem}[thm]{Lemma}
\newtheorem{cor}[thm]{Corollary}
\title{Improved bounds on maximum sets of letters in sequences with forbidden alternations}
\date{}
\author{Jesse Geneson}
\begin{document}
\maketitle
\begin{abstract}
Let $A_{s,k}(m)$ be the maximum number of distinct letters in any sequence which can be partitioned into $m$ contiguous blocks of pairwise distinct letters, has at least $k$ occurrences of every letter, and has no subsequence forming an alternation of length $s$. Nivasch (2010) proved that $A_{5, 2d+1}(m) = \theta( m \alpha_{d}(m))$ for all fixed $d \geq 2$. We show that $A_{s+1, s}(m) = \binom{m- \lceil \frac{s}{2} \rceil}{\lfloor \frac{s}{2} \rfloor}$ for all $s \geq 2$, $A_{5, 6}(m) = \theta(m \log \log m)$, and $A_{5, 2d+2}(m) = \theta(m \alpha_{d}(m))$ for all fixed $d \geq 3$. 
\end{abstract}

\section{Introduction}
A sequence $s$ \emph{contains} a sequence $u$ if some subsequence of $s$ can be changed into $u$ by a one-to-one renaming of its letters. If $s$ does not contain $u$, then $s$ \emph{avoids} $u$. A sequence $s$ is called \emph{$r$-sparse} if any $r$ consecutive letters in $s$ are distinct. Collections of contiguous distinct letters in $s$ are called \emph{blocks}. 

A \emph{generalized Davenport-Schinzel sequence} is an $r$-sparse sequence avoiding a fixed forbidden sequence with $r$ distinct letters. Bounds on the lengths of generalized Davenport-Schinzel sequences were used to bound the complexity of lower envelopes of sets of polynomials of limited degree \cite{DS}, the complexity of faces in arrangements of arcs \cite{book1}, and the maximum number of edges in simple $k$-quasiplanar graphs \cite{FPS}.

Let $D_{s}(n)$ be the maximum length of any $2$-sparse sequence with $n$ distinct letters which avoids alternations of length $s$. Nivasch \cite{Niv} and Klazar \cite{Kl1} proved $\lim_{n \rightarrow \infty} \frac{D_{5}(n)}{n \alpha(n)} = 2$, such that $\alpha(n)$ denotes the inverse Ackermann function. Agarwal, Sharir, Shor \cite{SAS} and Nivasch \cite {Niv} proved the bounds $D_{s}(n) = n 2^{\frac{1}{t!}\alpha(n)^{t} \pm O(\alpha(n)^{t-1})}$ for even $s \geq 6$ with $t = \frac{s-4}{2}$. Recently Pettie \cite{Pet1} derived sharp bounds on $D_{s}(n)$ for all odd $s$.

Nivasch's bounds on $D_{s}(n)$ were derived using an extremal function which maximizes number of distinct letters instead of length. Let $A_{s,k}(m)$ be the maximum number of distinct letters in any sequence on $m$ blocks avoiding alternations of length $s$ in which every letter occurs at least $k$ times. Clearly $A_{s, k}(m) = 0$ if $m < k$ and $A_{s,k}(m) = \infty$ if $k < s-1$ and $k \leq m$. Nivasch proved that $A_{5, 2d+1}(m) = \theta(m \alpha_{d}(m))$ for each fixed $d \geq 2$. 

\subsection{Ackermann functions}

To define the Ackermann hierarchy let $A_{1}(n) = 2n$ and for $k \geq 2$, $A_{k}(0) = 1$ and $A_{k}(n) = A_{k-1}(A_{k}(n-1))$ for $n \geq 1$. To define the inverse functions let $\alpha_{k}(x) = \min \left\{n : A_{k}(n) \geq x \right\}$ for all $k \geq 1$.

We define the Ackermann function $A(n)$ to be $A_{n}(3)$ as in \cite{Niv}. The inverse Ackermann function $\alpha(n)$ is defined to be $\min \left\{x : A(x) \geq n \right\}$.

\subsection{Formations}

Let an \emph{$(r, s)$-formation} be a concatenation of $s$ permutations of $r$ distinct letters. For example $abcddcbaadbc$ is a $(4,3)$-formation. Define $F_{r,s}(n)$ to be the maximum length of any $r$-sparse sequence with $n$ distinct letters which avoids all $(r,s)$-formations. 

Klazar \cite{Kl} proved $F_{r, 2}(n) = \theta(n)$ and $F_{r, 3}(n) = \theta(n)$ for every $r > 0$. Nivasch proved $F_{r, 4}(n) = \theta(n \alpha(n))$ for $r \geq 2$. Agarwal, Sharir, Shor \cite{SAS} and Nivasch \cite{Niv} showed that $F_{r,s}(n) = n 2^{\frac{1}{t!}\alpha(n)^{t} \pm O(\alpha(n)^{t-1})}$ for all $r \geq 2$ and odd $s \geq 5$ with $t = \frac{s-3}{2}$.

Let $F_{r, s, k}(m)$ be the maximum number of distinct letters in any sequence on $m$ blocks avoiding every $(r, s)$-formation in which every letter occurs at least $k$ times. Clearly $F_{r, s, k}(m) = 0$ if $m < k$ and $F_{r,s,k}(m) = \infty$ if $k < s$ and $k \leq m$. Every $(r, s)$-formation contains an alternation of length $s+1$ for every $r \geq 2$, so $A_{s+1, k}(m) \leq F_{r, s, k}(m)$ for every $r \geq 2$. Nivasch proved for $r \geq 2$ that $F_{r, 4, 2d+1}(m) = \theta(m \alpha_{d}(m))$ for each fixed $d \geq 2$. 

\subsection{Interval chains}\label{prel}

A \emph{$k$-chain on $[1, m]$} is a sequence of $k$ consecutive, disjoint, nonempty intervals of the form $[a_{0}, a_{1}] [a_{1}+1, a_{2}] \ldots [a_{k-1}+1, a_{k}]$ for integers $1 \leq a_{0} \leq a_{1} < \ldots < a_{k} \leq m$. An \emph{$s$-tuple} is a set of $s$ distinct integers. An $s$-tuple \emph{stabs} an interval chain if each element of the $s$-tuple is in a different interval of the chain.

Let $\zeta_{s, k}(m)$ denote the minimum size of a collection of $s$-tuples such that every $k$-chain on $[1,m]$ is stabbed by an $s$-tuple in the collection. Clearly $\zeta_{s, k}(m) = 0$ if $m < k$ and $\zeta_{s, k}(m)$ is undefined if $k < s$ and $k \leq m$. 

Alon \emph{et al.} \cite{AKNSS} showed $\zeta_{s, s}(m) = \binom{m- \lfloor \frac{s}{2} \rfloor}{\lceil \frac{s}{2} \rceil}$ for $s \geq 1$, $\zeta_{3, 4}(m) = \theta(m \log m)$, $\zeta_{3, 5}(m) = \theta(m \log \log m)$, and $\zeta_{3, k} (m) = \theta(m \alpha_{\lfloor \frac{k}{2} \rfloor}(m))$ for $k \geq 6$. 

Let $\eta_{r, s, k}(m)$ denote the maximum size of a collection $X$ of not necessarily distinct $k$-chains on $[1, m]$ so that there do not exist $r$ elements of $X$ all stabbed by the same $s$-tuple. Clearly $\eta_{r, s, k}(m) = 0$ if $m < k$ and $\eta_{r, s, k}(m) = \infty$ if $k < s$ and $k \leq m$. 

\subsection{Our results}

In Section~\ref{formint} we show $\eta_{r, s, k}(m) = F_{r, s+1, k+1}(m+1)$ for all $r \geq 1$ and $1 \leq s \leq k \leq m$. Since $\zeta_{s, k}(m) \geq \eta_{2, s, k}(m)$ for all $1 \leq s \leq k \leq m$, then $A_{s+2, k+1}(m+1) \leq \zeta_{k}^{s}(m)$ for all $1 \leq s \leq k \leq m$. Thus $A_{s+1, s}(m) \leq \binom{m- \lceil \frac{s}{2} \rceil}{\lfloor \frac{s}{2} \rfloor}$ for all $s \geq 2$, $A_{5, 6}(m) = O(m \log \log m)$, and $A_{5, 2d+2}(m) = O(m \alpha_{d}(m))$ for $d \geq 3$.

In Section~\ref{lbounds} we construct alternation-avoiding sequences to prove lower bounds on $A_{s, k}(m)$. We prove that $A_{s+1, s}(m) = \binom{m- \lceil \frac{s}{2} \rceil}{\lfloor \frac{s}{2} \rfloor}$ for all $s \geq 2$. Furthermore we show that $A_{5, 6}(m) = \Omega(m \log \log m)$ and $A_{5, 2d+2}(m) = \Omega(\frac{1}{d} m \alpha_{d}(m))$ for $d \geq 3$. Thus the bounds on $A_{5, d}(m)$ leave a multiplicative gap of $O(d)$ for all $d$. Sundar \cite{Sund} derived bounds of similar order in $m$ on functions related to the Deque conjecture.

\section{Formations and interval chains}\label{formint}
We show $F_{r, s+1, k+1}(m+1) = \eta_{r, s, k}(m)$ for all $r \geq 1$ and $1 \leq s \leq k \leq m$ using maps like those between matrices and sequences in \cite{CK} and \cite{Pet}.

\begin{lem}
\label{equalupper}
$F_{r, s+1, k+1}(m+1) \leq \eta_{r, s, k}(m)$ for all $r \geq 1$ and $1 \leq s \leq k \leq m$.
\end{lem}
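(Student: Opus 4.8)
The plan is to describe an explicit injection from sequences witnessing $F_{r,s+1,k+1}(m+1)$ to collections of $k$-chains witnessing $\eta_{r,s,k}(m)$. Let $S$ be an $(r+1)$-sparse — wait, careful: $S$ is a sequence on $m+1$ blocks in which every letter occurs at least $k+1$ times and which avoids every $(r,s+1)$-formation, and which has exactly $F_{r,s+1,k+1}(m+1)$ distinct letters. Label the blocks $B_1,\dots,B_{m+1}$. Following the matrix–sequence correspondence of \cite{CK} and \cite{Pet}, I would assign to each letter $a$ of $S$ a $k$-chain on $[1,m]$ as follows: let $1 \le b_0 < b_1 < \dots < b_k \le m+1$ be the indices of the first $k+1$ blocks containing $a$, and associate to $a$ the $k$-chain $[b_0, b_1-1][b_1, b_2-1]\cdots[b_{k-1}, b_k-1]$ on $[1,m]$ (the $+1$ shift in $m+1 \mapsto m$ and in block indices is exactly what makes the intervals land in $[1,m]$ and be nonempty and disjoint). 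Collecting these chains over all letters (with multiplicity) gives a multiset $X$ of $k$-chains on $[1,m]$ of size equal to the number of distinct letters of $S$.

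The key step is then to show $X$ has no $r$ members stabbed by a common $s$-tuple. Suppose for contradiction that letters $a_1,\dots,a_r$ give chains all stabbed by an $s$-tuple $T = \{t_1 < \dots < t_s\} \subseteq [1,m]$. Being stabbed by $T$ means that for each letter $a_i$, the $s$ indices $t_1,\dots,t_s$ fall in $s$ distinct intervals of $a_i$'s chain; translating back, for each $a_i$ there are blocks $c^{(i)}_1 < c^{(i)}_2 < \dots < c^{(i)}_s$ among the first $k+1$ occurrence-blocks of $a_i$ with $c^{(i)}_j$ lying in the interval (of $[1,m]$-indices) determined by $t_j$, i.e. the $t_j$ separate consecutive such blocks. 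The point is that these $s$ "slots" are defined by the $s$-tuple $T$ uniformly across all $r$ letters, so within block ranges $(t_{j}, t_{j+1}]$ (suitably interpreted, with the first and last slots handled by the endpoints) each of $a_1,\dots,a_r$ occurs, and moreover occurs in the *same relative order as dictated by the block structure*. Reading off one occurrence of each of $a_1,\dots,a_r$ from each of the $s$ slots and concatenating, one recovers $s$ permutations of $\{a_1,\dots,a_r\}$ — hence an $(r,s+1)$-formation? No: $s$ slots give $s$ blocks, but a formation needs $s+1$ permutations. The extra permutation comes from the *first* occurrence block $b_0$ of each letter together with the last: the chain has $k \ge s$ intervals built from $k+1$ blocks, and being stabbed by an $s$-tuple uses only $s$ of the interval-boundaries, leaving one more block (the common first block $b_0$, or rather an extra separated block) to supply the $(s+1)$st permutation. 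This is the step I expect to be the main obstacle — pinning down exactly which $s+1$ blocks (and in particular which extra block beyond the $s$ slots) yield $s+1$ disjoint block-ranges each containing all of $a_1,\dots,a_r$, so that $S$ contains an $(r,s+1)$-formation, contradicting the choice of $S$.

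Once that is established, $X$ witnesses $\eta_{r,s,k}(m) \ge |X| = F_{r,s+1,k+1}(m+1)$, which is the claim. I would organize the writeup by first defining the map precisely, then verifying that each image is a bona fide $k$-chain on $[1,m]$ (nonempty, disjoint, consecutive intervals — this uses $(r+1)$-sparseness or at least that blocks are nonempty and the first $k+1$ occurrence blocks are distinct, which holds since each letter occurs $\ge k+1$ times and blocks consist of distinct letters), and finally proving the contrapositive: any $r$ chains sharing a stabbing $s$-tuple force an $(r,s+1)$-formation in $S$. The converse inequality (needed for the full equality $F_{r,s+1,k+1}(m+1) = \eta_{r,s,k}(m)$) would be handled separately; here only "$\le$" is asserted, so I would stop after the contrapositive argument.
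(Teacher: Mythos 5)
Your reduction is the same as the paper's: the identical map from each letter's first $k+1$ occurrence blocks $b_0<\cdots<b_k$ to the $k$-chain $[b_0,b_1-1][b_1,b_2-1]\cdots[b_{k-1},b_k-1]$ on $[1,m]$, followed by a contradiction argument that $r$ chains with a common stabbing $s$-tuple would force an $(r,s+1)$-formation. But that contradiction argument is the entire content of the lemma, and you leave it as an acknowledged ``main obstacle'' rather than proving it; moreover your guess about how to close it --- hunting for one \emph{extra} block beyond the $s$ ``slots,'' supplied by ``the first occurrence block $b_0$ of each letter together with the last'' --- points in the wrong direction. The occurrences you need do not live \emph{at} the $s$ stabbing points; the point is that the $s$-tuple $t_1<\cdots<t_s$ cuts the full block range $[1,m+1]$ into $s+1$ consecutive pieces $[t_{j-1}+1,\,t_j]$ for $1\le j\le s+1$ (with $t_0=0$, $t_{s+1}=m+1$), and the stabbing condition forces each of the $r$ letters to occur in \emph{every} one of these $s+1$ pieces. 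You actually state something close to this in passing (``within block ranges $(t_j,t_{j+1}]$ \dots\ each of $a_1,\dots,a_r$ occurs'') but then talk yourself out of it by conflating the $s$ stabbing points with the $s+1$ gaps they create.

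Here is the missing verification. Fix a letter $a$ with occurrence blocks $b_0<\cdots<b_k$, so its chain has $i$th interval $[b_{i-1},b_i-1]$, and suppose $t_j$ lies in interval number $i_j$ with $i_1<\cdots<i_s$. Then $b_{i_j-1}\le t_j$ and $b_{i_j}\ge t_j+1$ for each $j$. Since $i_j-1\ge i_{j-1}$, we get $b_{i_j-1}\ge b_{i_{j-1}}\ge t_{j-1}+1$, so $a$ occurs in a block of $[t_{j-1}+1,\,t_j]$ for each $1\le j\le s$ (the case $j=1$ using only $b_{i_1-1}\ge b_0\ge 1$), and $b_{i_s}\in[t_s+1,\,m+1]$ handles the last piece. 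Applying this to all $r$ letters and taking one occurrence of each letter from each of the $s+1$ disjoint, consecutive pieces yields an $(r,s+1)$-formation in the sequence, the desired contradiction. Without this step the proposal is a correct setup but not a proof; the remaining items you list (the images being bona fide $k$-chains on $[1,m]$, the size count) are routine and match the paper.
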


\begin{proof}
Let $P$ be a sequence with $F_{r, s+1, k+1}(m+1)$ distinct letters and $m+1$ blocks $1, \ldots, m+1$ such that no subsequence is a concatenation of $s+1$ permutations of $r$ different letters and every letter in $P$ occurs at least $k+1$ times. Construct a collection of $k$-chains on $[1,m]$ by converting each letter in $P$ to a $k$-chain: if the first $k+1$ occurrences of letter $a$ are in blocks $a_{0}, \ldots, a_{k}$, then let $a^{*}$ be the $k$-chain with $i^{th}$ interval $[ a_{i-1}, a_{i}-1 ]$.

Suppose for contradiction that there exist $r$ distinct letters $q_{1}, \ldots, q_{r}$ in $P$ such that $q_{1}^{*}, \ldots, q_{r}^{*}$ are stabbed by the same $s$-tuple $1 \leq j_{1} < \ldots < j_{s} \leq m$. Let $j_{0} = 0$ and $j_{s+1} = m+1$. Then for each $1 \leq i \leq s+1$, $q_{n}$ occurs in some block $b_{n,i}$ such that $b_{n, i} \in [ j_{i-1}+1, j_{i}]$ for every $1 \leq n \leq r$. Hence the letters $q_{1}, \ldots, q_{r}$ make an $(r, s+1)$-formation in $P$, a contradiction.
\end{proof}

\begin{cor}
$A_{s+2, k+1}(m+1) \leq \zeta_{k}^{s}(m)$ for all $1 \leq s \leq k \leq m$.
\end{cor}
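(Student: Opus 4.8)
The plan is to deduce the corollary by chaining together three facts already available in the excerpt: the containment of an alternation of length $s+1$ inside every $(2,s)$-formation, Lemma~\ref{equalupper} specialized to $r=2$, and the inequality $\eta_{2,s,k}(m)\le \zeta_{s,k}(m)$ (here $\zeta_k^s(m)$ is just alternate notation for $\zeta_{s,k}(m)$).

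First I would invoke the observation from the introduction that every $(2,s)$-formation contains an alternation of length $s+1$. Hence any sequence on $m$ blocks that avoids alternations of length $s+1$ a fortiori avoids all $(2,s)$-formations, which gives $A_{s+1,k}(m)\le F_{2,s,k}(m)$ whenever both quantities are defined. Substituting $s\mapsto s+1$, $k\mapsto k+1$, $m\mapsto m+1$ yields $A_{s+2,k+1}(m+1)\le F_{2,s+1,k+1}(m+1)$; the hypothesis $1\le s\le k\le m$ forces $1\le s+1\le k+1\le m+1$, so the function on the right lies in its finite regime. Then Lemma~\ref{equalupper} with $r=2$ gives $F_{2,s+1,k+1}(m+1)\le \eta_{2,s,k}(m)$.

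It then remains to verify $\eta_{2,s,k}(m)\le \zeta_{s,k}(m)$. Let $C$ be a minimum-size collection of $s$-tuples stabbing every $k$-chain on $[1,m]$, so $|C|=\zeta_{s,k}(m)$, and let $X$ be a maximum-size collection of $k$-chains such that no $s$-tuple stabs two members of $X$, so $|X|=\eta_{2,s,k}(m)$. Each chain $x\in X$ is stabbed by some $s$-tuple of $C$; choose one such $s$-tuple $\phi(x)\in C$. If $\phi(x)=\phi(x')$ for distinct $x,x'\in X$, that common $s$-tuple would stab both chains, contradicting the defining property of $X$; so $\phi$ is injective and $\eta_{2,s,k}(m)=|X|\le|C|=\zeta_{s,k}(m)$. (No chain occurs twice in $X$, since two equal chains are stabbed by the same $s$-tuple.)

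Stringing the inequalities together gives $A_{s+2,k+1}(m+1)\le F_{2,s+1,k+1}(m+1)\le \eta_{2,s,k}(m)\le \zeta_{s,k}(m)=\zeta_k^s(m)$, which is the claim. There is no substantive obstacle: the only steps requiring any care are the injectivity argument for $\phi$ and checking that the parameter substitution keeps each of $A$, $F$, $\eta$, and $\zeta$ inside the range where it is finite and well-defined, both of which are guaranteed by $1\le s\le k\le m$.
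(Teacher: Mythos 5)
Your proposal is correct and follows essentially the same route the paper intends: the corollary is presented as an immediate consequence of the chain $A_{s+2,k+1}(m+1)\le F_{2,s+1,k+1}(m+1)\le \eta_{2,s,k}(m)\le \zeta_{s,k}(m)$, where the first inequality comes from formations containing alternations, the second from Lemma~\ref{equalupper} with $r=2$, and the third from the observation $\zeta_{s,k}(m)\ge\eta_{2,s,k}(m)$ stated in the introduction. Your explicit injectivity argument for that last inequality is a correct filling-in of a step the paper leaves implicit.
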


The bounds on $\zeta_{s, k}(m)$ in \cite{AKNSS} imply the next corollary.

\begin{cor}
$A_{s+1, s}(m) \leq \binom{m- \lceil \frac{s}{2} \rceil}{\lfloor \frac{s}{2} \rfloor}$ for $s \geq 2$, $A_{5,5}(m) = O(m \log m)$, $A_{5, 6}(m) = O(m \log \log m)$, and $A_{5, k} (m) = O(m \alpha_{\lfloor \frac{k-1}{2} \rfloor}(m))$ for $k \geq 7$.
\end{cor}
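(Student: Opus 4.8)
The plan is to read off all four estimates from the preceding corollary, which for notational clarity I restate as $A_{t+2,\ell+1}(n+1) \le \zeta_{t,\ell}(n)$ for all $1 \le t \le \ell \le n$, by specializing the parameters $t$ and $\ell$ and then substituting the four bounds on $\zeta_{t,\ell}(n)$ proved by Alon \emph{et al.}\ in \cite{AKNSS}. There is no new combinatorial content; everything rests on Lemma~\ref{equalupper}, the formation/interval-chain dictionary, and the cited results.

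For the binomial bound I would take $t = \ell$, giving $A_{\ell+2,\ell+1}(n+1) \le \zeta_{\ell,\ell}(n) = \binom{n-\lfloor \ell/2\rfloor}{\lceil \ell/2\rceil}$ for all $\ell \ge 1$ and $n \ge \ell$. Writing $s = \ell+1$ and $m = n+1$, this becomes $A_{s+1,s}(m) \le \binom{m-1-\lfloor (s-1)/2\rfloor}{\lceil (s-1)/2\rceil}$ for $s \ge 2$. A short parity check finishes it: if $s = 2j$ then $\lfloor(s-1)/2\rfloor = j-1$ and $\lceil(s-1)/2\rceil = j$, so the right side is $\binom{m-j}{j} = \binom{m-\lceil s/2\rceil}{\lfloor s/2\rfloor}$; if $s = 2j+1$ then $\lfloor(s-1)/2\rfloor = \lceil(s-1)/2\rceil = j$, so the right side is $\binom{m-1-j}{j} = \binom{m-(j+1)}{j} = \binom{m-\lceil s/2\rceil}{\lfloor s/2\rfloor}$.

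For the three asymptotic bounds I would instead fix $t = 3$, so that $A_{5,\ell+1}(n+1) \le \zeta_{3,\ell}(n)$. Taking $\ell = 4$, $\ell = 5$, and $\ell = k-1 \ge 6$ and invoking $\zeta_{3,4}(n) = \theta(n\log n)$, $\zeta_{3,5}(n) = \theta(n\log\log n)$, and $\zeta_{3,\ell}(n) = \theta(n\,\alpha_{\lfloor \ell/2\rfloor}(n))$ gives $A_{5,5}(m) \le \zeta_{3,4}(m-1)$, $A_{5,6}(m) \le \zeta_{3,5}(m-1)$, and $A_{5,k}(m) \le \zeta_{3,k-1}(m-1)$ for $k \ge 7$. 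Since $\log$, $\log\log$, and each $\alpha_d$ are nondecreasing, replacing $m-1$ by $m$ costs only a constant factor, and $\lfloor (k-1)/2\rfloor$ is precisely the subscript of $\alpha$ claimed in the statement, so the three $O$-bounds follow.

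The only point requiring genuine care is the index arithmetic in the binomial case, where the off-by-one shifts in both $m$ and $s$ must be reconciled with the swap of floors and ceilings between the $\zeta$ formula of \cite{AKNSS} and the formula claimed for $A_{s+1,s}$; the parity split above is what makes this transparent. Everything else is direct substitution.
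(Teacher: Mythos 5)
Your proposal is correct and matches the paper's intended derivation exactly: the paper justifies this corollary in one line by citing the bounds on $\zeta_{s,k}(m)$ from \cite{AKNSS} together with the preceding corollary $A_{s+2,k+1}(m+1) \leq \zeta_{s,k}(m)$, and your substitutions ($t=\ell$ for the binomial case, $t=3$ with $\ell = 4, 5, k-1$ for the asymptotic cases) are precisely the specializations needed. Your parity check reconciling the floor/ceiling swap in the binomial bound is a correct and welcome filling-in of arithmetic the paper leaves implicit.
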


To prove $F_{r, s+1, k+1}(m+1) \geq \eta_{r, s, k}(m)$ for all $r \geq 1$ and $1 \leq s \leq k \leq m$, we convert collections of $k$-chains into sequences with a letter corresponding to each $k$-chain. 

\begin{lem}
\label{equallower}
$F_{r, s+1, k+1}(m+1) \geq \eta_{r, s, k}(m)$ for all $r \geq 1$ and $1 \leq s \leq k \leq m$.
\end{lem}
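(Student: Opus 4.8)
The plan is to reverse the construction from Lemma~\ref{equalupper}: starting from a collection $X$ of $k$-chains on $[1,m]$ with no $r$ members stabbed by a common $s$-tuple and $|X| = \eta_{r,s,k}(m)$, I would build a sequence $P$ on $m+1$ blocks in which every letter occurs at least $k+1$ times, that has $|X|$ distinct letters, and that avoids every $(r,s+1)$-formation. Concretely, to each $k$-chain $c = [a_0,a_1][a_1+1,a_2]\cdots[a_{k-1}+1,a_k]$ I associate a letter $c^\#$ whose occurrences will be placed in blocks $a_0, a_1+1, a_2+1, \ldots, a_{k-1}+1$; this is exactly the inverse of the map $a \mapsto a^*$ (which sent first-occurrence blocks $a_0,\ldots,a_k$ to intervals $[a_{i-1},a_i-1]$). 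These give $k+1$ occurrences per letter, spread over blocks in $[1,m+1]$. Then inside each block I order the letters in whatever order is needed for the sequence to be $r$-sparse (or $(m+1)$-sparse): since the paper's generalized DS sequences require $r$-sparseness, I would order the occurrences within each block so that consecutive occurrences across the block boundary are distinct — because each letter appears at most once per block and we have freedom in the within-block order, this is routine, essentially the same bookkeeping as in \cite{CK}, \cite{Pet}.

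The heart of the argument is the avoidance claim. Suppose for contradiction $P$ contains an $(r,s+1)$-formation: there are $r$ distinct letters $c_1^\#,\ldots,c_r^\#$ and blocks $1 = j_0 < j_1 < \cdots < j_s < j_{s+1} = m+2$ (think of the formation's $s+1$ permutation-blocks as carved out by $s$ ``cut points'' among the $m+1$ blocks) such that each $c_n^\#$ has an occurrence in some block lying in $[j_{i-1}, j_i - 1]$ for every $i = 1,\ldots,s+1$. I would then set $j'_i = j_i - 1$ for $i=1,\ldots,s$, so that $1 \le j'_1 < \cdots < j'_s \le m$, and show that the $s$-tuple $(j'_1,\ldots,j'_s)$ stabs every chain $c_1,\ldots,c_r$. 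The point is that an occurrence of $c_n^\#$ in a block $b \in [j_{i-1}, j_i-1]$ for the construction above means the corresponding interval of the chain $c_n$ — namely the interval whose left endpoint relates to block $b$ — straddles the boundary, so $j'_i = j_i - 1$ lands in a distinct interval of $c_n$ for each $i$; having one such witness for each of the $s+1$ formation-blocks forces all $s$ of the $j'_i$ into distinct intervals of $c_n$. This contradicts the defining property of $X$.

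The main obstacle, and the step I would be most careful about, is pinning down the exact index bookkeeping so that ``an occurrence of $c^\#$ in block $b$ with $b$ in the $i$-th formation-window'' translates precisely into ``$j'_i$ lies in the $i$-th (or an appropriate) interval of the chain $c$,'' with the off-by-one shifts between blocks-of-$P$ (indexed $1,\ldots,m+1$) and points-of-$[1,m]$ handled consistently. This is the same subtlety that makes Lemma~\ref{equalupper} work in reverse: there the first $k+1$ occurrences in blocks $a_0,\ldots,a_k$ became intervals $[a_{i-1},a_i-1]$, so the inverse map must send a chain's interval left-endpoints back to occurrence blocks, and the cut points $j_i$ among blocks correspond to stabbing points $j_i - 1$ among $[1,m]$. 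I would verify this correspondence is a genuine bijection between the relevant structures, which immediately gives both the count $|X|$ of distinct letters and the contrapositive needed for avoidance. Combined with Lemma~\ref{equalupper}, this establishes the equality $F_{r,s+1,k+1}(m+1) = \eta_{r,s,k}(m)$, and then the corollaries on $A_{s,k}(m)$ follow from $\zeta_{s,k}(m) \ge \eta_{2,s,k}(m)$ together with the inequality $A_{s+1,k}(m) \le F_{r,s,k}(m)$ noted in the introduction.
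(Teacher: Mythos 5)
There is a genuine gap at the heart of your avoidance argument. You assume that an $(r,s+1)$-formation in $P$ comes with cut points $j_1 < \cdots < j_s$ \emph{between blocks} separating its $s+1$ permutations, i.e.\ that consecutive permutations of the formation never share a block. This is false in general: a permutation of the formation may end in the same block in which the next permutation begins (for instance, the two-block sequence $a\,\cdot\,ba\,\cdot\,b$ contains the $(2,2)$-formation $ab\,ab$, but no block boundary separates its two permutations). Without block-separated permutations your passage from the formation to a stabbing $s$-tuple $(j_1-1,\ldots,j_s-1)$ does not go through. Repairing this requires exactly the two ingredients you wave away as ``routine bookkeeping'': the paper orders each block so that previously seen letters precede new letters and, among previously seen letters, more recently seen ones come first; it then takes the \emph{lexicographically first} $(r,s+1)$-formation $f_0$ and shows that if the $i$-th permutation of $f_0$ ended in the block where the $(i+1)$-st begins, one could swap in an earlier occurrence of the offending letter and obtain a lexicographically earlier formation, a contradiction. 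Only after establishing $\pi_i < \rho_{i+1}$ for all $i$ does the tuple $(\pi_1,\ldots,\pi_s)$ stab all $r$ chains. Your proposal treats the within-block order as a free choice made only for sparseness, so it contains no mechanism for producing a block-separated formation, and the contradiction you aim for cannot be derived.

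A secondary slip: the inverse map should place the letter of the chain $[a_0,a_1][a_1+1,a_2]\cdots[a_{k-1}+1,a_k]$ in the $k+1$ blocks $a_0,\,a_1+1,\,\ldots,\,a_{k-1}+1,\,a_k+1$ (interval left endpoints, plus one more than each right endpoint, as in the paper); your list stops at $a_{k-1}+1$ and so gives only $k$ occurrences, contradicting your own claim of $k+1$. This is fixable, but the missing lexicographic/ordering argument above is the essential defect.
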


\begin{proof}
Let $X$ be a maximal collection of $k$-chains on $[1, m]$ so that there do not exist $r$ elements of $X$ all stabbed by the same $s$-tuple. To change $X$ into a sequence $P$ create a letter $a$ for every $k$-chain $a^{*}$ in $X$, and put $a$ in every block $i$ such that either $a^{*}$ has an interval with least element $i$ or $a^{*}$ has an interval with greatest element $i-1$. 

Order the letters in blocks starting with the first block and moving to the last. Let $A_{i}$ be the letters in block $i$ which also occur in some block $j < i$ and let $B_{i}$ be the letters which have first occurrence in block $i$. 

All of the letters in $A_{i}$ occur before all of the letters in $B_{i}$. If $a$ and $b$ are in $A_{i}$, then $a$ appears before $b$ in block $i$ if the last occurrence of $a$ before block $i$ is after the last occurrence of $b$ before block $i$. The letters in $B_{i}$ may appear in any order.

$P$ is a sequence on $m+1$ blocks in which each letter occurs $k+1$ times. Suppose for contradiction that there exist $r$ letters $q_{1}, \ldots, q_{r}$ which form an $(r, s+1)$-formation in $P$. List all $(r, s+1)$-formations on the letters $q_{1}, \ldots, q_{r}$ in $P$ lexicographically, so that formation $f$ appears before formation $g$ if there exists some $i \geq 1$ such that the first $i-1$ elements of $f$ and $g$ are the same, but the $i^{th}$ element of $f$ appears before the $i^{th}$ element of $g$ in $P$. 

Let $f_{0}$ be the first $(r, s+1)$-formation on the list and let $\pi_{i}$ (respectively $\rho_{i}$) be the number of the block which contains the last (respectively first) element of the $i^{th}$ permutation in $f_{0}$ for $1 \leq i \leq s+1$. Suppose for contradiction that for some $1 \leq i \leq s$, $\pi_{i} = \rho_{i+1}$. Let $a$ be the last letter of the $i^{th}$ permutation and let $b$ be the first letter of the $(i+1)^{st}$ permutation. 

Then $a$ occurs before $b$ in block $\pi_{i}$ and the $b$ in $\pi_{i}$ is not the first occurrence of $b$ in $P$, so the $a$ in $\pi_{i}$ is not the first occurrence of $a$ in $P$. Otherwise $a$ would appear after $b$ in $\pi_{i}$. Since the $a$ and $b$ in $\pi_{i}$ are not the first occurrences of $a$ and $b$ in $P$, then the last occurrence of $a$ before $\pi_{i}$ must be after the last occurrence of $b$ before $\pi_{i}$. Let $f_{1}$ be the subsequence obtained by deleting the $a$ in $\pi_{i}$ from $f_{0}$ and inserting the last occurrence of $a$ before $\pi_{i}$. Then $f_{1}$ is an $(r, s+1)$-formation and $f_{1}$ occurs before $f_{0}$ on the list. This contradicts the definition of $f_{0}$, so for every $1 \leq i \leq s$, $\pi_{i} < \rho_{i+1}$.

For every $1 \leq j \leq r$ and $1 \leq i \leq s+1$, the letter $q_{j}$ appears in some block between $\rho_{i}$ and $\pi_{i}$ inclusive. Since $\pi_{i} < \rho_{i+1}$ for every $1 \leq i \leq s$, the $s$-tuple $(\pi_{1}, \ldots, \pi_{s})$ stabs each of the interval chains $q_{1}^{*}, \ldots, q_{r}^{*}$, a contradiction. Hence $P$ contains no $(r, s+1)$-formation.
\end{proof}

\section{Lower bounds}\label{lbounds}

In the last section we showed that $A_{s+1, s}(m) \leq \binom{m- \lceil \frac{s}{2} \rceil}{\lfloor \frac{s}{2} \rfloor}$ for $s \geq 2$. The next lemma provides a matching lower bound.

\begin{lem}
$A_{s+1, s}(m) \geq \binom{m- \lceil \frac{s}{2} \rceil}{\lfloor \frac{s}{2} \rfloor}$ for all $s \geq 2$ and $m \geq s+1$.
\end{lem}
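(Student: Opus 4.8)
The plan is to construct, for each $m \ge s+1$, a sequence realizing the bound. Write $t = \lfloor s/2\rfloor$ and $u = \lceil s/2\rceil$. I would index letters by $t$-element ``clump patterns'': tuples $z_1 < z_2 < \cdots < z_t$ of block numbers with $z_{i+1} \ge z_i + 2$ for all $i$, with $z_t \le m-1$, and with $z_1 \ge 1$ when $s$ is even and $z_1 \ge 2$ when $s$ is odd. The corresponding letter is placed in the $2t$ blocks $\bigcup_{i=1}^t \{z_i, z_i+1\}$, together with block $1$ when $s$ is odd. The gap condition makes these $s$ blocks distinct, $z_t \le m-1$ keeps them inside $[1,m]$, and the substitution $w_i = z_i - (i-1)$ (respectively $w_i = z_i - i$ for odd $s$) puts the patterns in bijection with $t$-subsets of a ground set of size $m - u$, so there are exactly $\binom{m-u}{t}$ letters and each occurs exactly $s$ times. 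Equivalently, this is the sequence produced by applying the map of Lemma~\ref{equallower} (with $r = 2$) to the extremal family of $(s-1)$-chains on $[1,m-1]$ witnessing $\zeta_{s-1,s-1}(m-1) = \binom{m-u}{t}$ from \cite{AKNSS}; accordingly I would order the letters within each block by exactly the rule used in Lemma~\ref{equallower} --- letters with a previous occurrence before those without, and among the former the one whose previous occurrence is more recent comes first --- so that the sequence automatically avoids every $(2,s-1)$-formation.

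What remains, and what is the real content, is to check that this sequence contains no alternation of length $s+1$. For $s \le 3$ this is automatic: a sequence avoiding all $(2,s-1)$-formations contains no alternation of length $2(s-1)$ (such an alternation is itself a $(2,s-1)$-formation), and $2(s-1) \le s+1$ when $s \le 3$. For $s \ge 4$ the condition ``no $(2,s-1)$-formation'' is genuinely weaker than ``no alternation of length $s+1$'', so the clump structure must be used. I would fix two letters $c$ and $d$ with clump patterns $z_\bullet$ and $w_\bullet$ and examine how their clumps interleave. When clump $i$ of $c$ and clump $j$ of $d$ occupy disjoint blocks, their four occurrences appear in the merged word as a monotone chunk $\cdots cc\cdots dd\cdots$, which can extend an alternation built from the remaining clumps by only a bounded number of steps; when two clumps share a block there are just a few overlap/abut patterns, and in each the ``more recent occurrence first'' ordering is precisely what forbids the one orientation of the two shared blocks that would add an extra pair of alternations. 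Organizing the argument as an induction on the number of clumps (peeling off the rightmost clump of each letter, or alternatively inducting on $s$ via the splicing identity $\binom{m-u}{t} = \binom{m-1-u}{t} + \binom{m-2-\lceil(s-2)/2\rceil}{t-1}$, which exhibits the construction as $A_{s+1,s}(m-1)$ plus a copy of $A_{s-1,s-2}(m-2)$ with two occurrences prepended to each letter), a hypothetical alternation of length $s+1$ is forced down to a forbidden alternation among fewer clumps.

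I expect the main obstacle to be exactly this last step: fixing the within-block order so that the short alternations contributed by individual pairs of clumps never concatenate into a long one, and carrying the bookkeeping uniformly through all the ways two letters' clumps can be disjoint, overlap, abut, or nest --- in particular the degenerate case $z_{i+1} = z_i + 2$ where consecutive clumps of one letter touch, and the interaction of the shared block $1$ with the first clumps in the odd case. Each individual configuration reduces to inspecting a word of length $2s$ over a two-letter alphabet, which is mechanical; the difficulty is making the reduction to ``fewer clumps'' go through uniformly.
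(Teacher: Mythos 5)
Your construction is, up to cosmetic relabeling, the same as the paper's: your ``clumps'' $\{z_i,z_i+1\}$ with the gap condition $z_{i+1}\ge z_i+2$ are exactly the paper's $\frac{s}{2}$-tuples of pairwise non-adjacent two-block fans, your extra shared block in the odd case mirrors the paper's appended block, and your bijective count of the letters is correct. The problem is that the one claim that carries all the content of the lemma --- that no two letters form an alternation of length $s+1$ --- is not proved. You say so yourself: you describe a plan (induction on the number of clumps, or on $s$ via a splicing identity, with a case analysis of how two letters' clumps are disjoint, overlap, abut, or nest) and then identify ``exactly this last step'' as the main obstacle. The appeal to Lemma~\ref{equallower} only yields avoidance of $(2,s-1)$-formations, which, as you correctly note, is strictly weaker than what is needed once $s\ge 4$. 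So as it stands the proposal establishes the right sequence and the right letter count but not the forbidden-alternation property, which is a genuine gap rather than a routine verification.

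The paper closes this gap not by case analysis but by a short global count that you should adopt. Make each clump (fan) a palindrome --- the second block of a clump is the first block reversed --- and order letters within a block exactly as you proposed (previously occurring letters first, most recent previous occurrence earliest; the paper additionally fixes the order of first occurrences by which later fan first separates the two letters). Then for any two distinct letters $x$ and $y$: a fan containing $x$ but not $y$ (or vice versa) contributes at most $1$ to any $x$--$y$ alternation, since its restriction to $\{x,y\}$ is a constant word; and a fan containing both contributes at most $2$ in the amortized sense, because its restriction is the palindrome $xyyx$ or $yxxy$ and the ordering rule forces consecutive shared fans to repeat the same orientation. If $x$ and $y$ share $i$ fans, the total alternation length is at most $\bigl(\frac{s}{2}-i\bigr)+\bigl(\frac{s}{2}-i\bigr)+2i=s$, uniformly in $i$, with no induction and no enumeration of overlap patterns. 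This single inequality replaces the entire reduction you were trying to set up, and it is where you should direct your effort if you want to complete the argument along your own lines.
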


\begin{proof}
For every $s \geq 1$ and $m \geq s+1$ we build a sequence $X_{s}(m)$ with $\binom{m- \lceil \frac{s}{2} \rceil}{\lfloor \frac{s}{2} \rfloor}$ distinct letters. First consider the case of even $s \geq 2$. The sequence $X_{s}(m)$ is the concatenation of $m-1$ fans, so that each fan is a palindrome consisting of two blocks of equal length.

First assign letters to each fan without ordering them. Create a letter for every $\frac{s}{2}$-tuple of non-adjacent fans, and put each letter in every fan in its $\frac{s}{2}$-tuple. Then order the letters in each fan starting with the first fan and moving to the last. Let $A_{i}$ be the letters in fan $i$ which occur in some fan $j < i$ and let $B_{i}$ be the letters which have first occurrence in fan $i$.

In the first block of fan $i$ all of the letters in $A_{i}$ occur before all of the letters in $B_{i}$. If $a$ and $b$ are in $A_{i}$, then $a$ occurs before $b$ in the first block of fan $i$ if the last occurrence of $a$ before fan $i$ is after the last occurrence of $b$ before fan $i$. If $a$ and $b$ are in $B_{i}$, then $a$ occurs before $b$ in the first block of fan $i$ if the first fan which contains $a$ without $b$ is before the first fan which contains $b$ without $a$. 

Consider for any distinct letters $x$ and $y$ the maximum alternation contained in the subsequence of $X_{s}(m)$ restricted to $x$ and $y$. Any fans which contain $x$ without $y$ or $y$ without $x$ add at most $1$ to the alternation length. Any fans which contain both $x$ and $y$ add $2$ to the alternation length. If $x$ and $y$ occur together in $i$ fans, then the length of their alternation is at most $(\frac{s}{2}-i)+(\frac{s}{2}-i)+2i = s$.

Every pair of adjacent fans have no letters in common, so every pair of adjacent blocks in different fans can be joined as one block when the $m-1$ fans are concatenated to form $X_{s}(m)$. Thus $X_{s}(m)$ has $m$ blocks and $\binom{m- \frac{s}{2}}{\frac{s}{2}}$ letters, and each letter occurs $s$ times. 

For odd $s \geq 3$, construct $X_{s}(m)$ by adding a block $r$ after $X_{s-1}(m-1)$ containing all of the letters in $X_{s-1}(m-1)$ such that $a$ occurs before $b$ in $r$ if the last occurrence of $a$ in $X_{s-1}(m-1)$ is after the last occurrence of $b$ in $X_{s-1}(m-1)$. Then $X_{s}(m)$ contains no alternation of length $s+1$ since $X_{s-1}(m-1)$ contains no alternation of length $s$. Moreover $X_{s}(m)$ has $m$ blocks and $\binom{m- \frac{s+1}{2}}{\frac{s-1}{2}}$ letters, and each letter occurs $s$ times. 
\end{proof}

The proof of the following lemma is much like the proof in \cite{Niv} that $A_{5, 2d+1}(m) = \Omega(\frac{1}{d} m \alpha_{d}(m))$ for $d \geq 2$. 

\begin{lem}
$A_{5, 6}(m) = \Omega(m \log \log m)$ and $A_{5, 2d+2}(m) = \Omega(\frac{1}{d} m \alpha_{d}(m))$ for $d \geq 3$.
\end{lem}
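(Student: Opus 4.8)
The plan is to mimic Nivasch's recursive lower-bound construction for $A_{5,2d+1}(m)$ (from \cite{Niv}), run with one extra occurrence per letter, and to supply a new base case for the recursion. Recall the shape of Nivasch's argument: for each ``level'' $\ell\ge 2$ one produces a sequence on $\Theta(m)$ blocks that avoids $ababa$, in which every letter occurs a bounded number $c_\ell$ of times, and which has $\Omega\!\left(\tfrac1\ell\, m\, g_\ell(m)\right)$ distinct letters, where $g_2(m)=\Theta(\log m)$ (this level is essentially the $A_{5,5}(m)=\Omega(m\log m)$ construction) and $g_\ell(m)=\Theta(\alpha_\ell(m))$ for $\ell\ge 3$. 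Passing from level $\ell$ to level $\ell+1$ is a substitution step: chop the blocks into consecutive groups, drop a rescaled level-$\ell$ sequence into each group, and give each letter a constant number of further occurrences in blocks outside its own group, their positions and within-block order dictated by first/last occurrences exactly as in the $X_s(m)$ construction above and in Lemma~\ref{equallower}, so that every restricted two-letter subsequence stays an alternation of bounded length. Each such step raises $c_\ell$ by $2$ and advances $g$ by one Ackermann level; iterating $d-2$ times from level $2$ yields $c=2d+1$ and $g=\alpha_d$, which is Nivasch's bound.

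To obtain $A_{5,2d+2}(m)$ I would keep the $d-2$ substitution steps verbatim but replace the level-$2$ base by one that uses $6$ occurrences per letter and already realizes $\Omega(m\log\log m)$ distinct letters; the substitution steps then deliver $6+2(d-2)=2d+2$ occurrences and a count $\Omega\!\left(\tfrac1d\, m\,\alpha_d(m)\right)$ for $d\ge 3$, while the base itself is precisely the case $d=2$, $A_{5,6}(m)=\Omega(m\log\log m)$ (which matches the upper bound already proved in Section~\ref{formint}). Building this $6$-occurrence base is the part needing genuinely new work. A plausible approach is a two-layer construction: an outer $A_{5,5}$-type scaffold on $\Theta(\log\log m)$ ``coarse'' positions, each of whose symbols is refined by an inner $A_{5,5}$-type sequence, with the sixth occurrence of each letter spent exactly to link its inner copy to the scaffold without ever creating an $ababa$; the double-logarithm then arises because the scaffold already contributes a $\log$ of a quantity that is itself logarithmic in $m$. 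This is the analogue, on the sequence side, of Alon \emph{et al.}'s $\zeta_{3,5}(m)=\Theta(m\log\log m)$ construction \cite{AKNSS}, but a lower bound on $\zeta_{3,5}$ (or on $\eta_{2,3,4}$) does not transfer to $A_{5,\cdot}$ through the correspondences of Section~\ref{formint}, so the construction must be carried out directly for sequences.

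The main obstacle is the interface bookkeeping, in the base and in each substitution step: after inserting a rescaled copy into a group of blocks and threading the global occurrences of a letter through the other groups, one must check that no pair of letters gains more alternation than budgeted --- in particular a letter internal to one copy versus a global letter, and two global letters whose copies interleave. The tool for this is the same as in Lemma~\ref{equallower}: fix the within-block order (internal letters by last previous occurrence, new letters by first-occurrence pattern) and then argue by a left-pushing/minimality argument that a hypothetical $ababa$ would contradict the chosen ordering. I expect the delicate points to be (i) extracting $\Omega(m\log\log m)$ from only six occurrences in the base --- as opposed to the $\Omega(m\log m)$ that five occurrences already give with a different, non-nested layout --- and (ii) verifying that each substitution step costs exactly two occurrences and gains exactly one Ackermann level; the remaining work (summing the number of letters over the groups at each level, and the $\Theta(1/d)$ loss accumulated over the $O(d)$ levels) is routine once (i) and (ii) are in place.
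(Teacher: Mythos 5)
Your high-level strategy is the right one --- the lemma is indeed proved by rerunning Nivasch's recursive $ababa$-avoiding construction with every letter picking up one extra occurrence --- but the proposal leaves the single genuinely new ingredient unbuilt. Everything hinges on the base case, and you explicitly defer it: the ``two-layer $A_{5,5}$-type scaffold with the sixth occurrence spent on linking'' is offered only as a plausible approach, with no description of where the six occurrences of a given letter actually sit, no verification that an inner letter and a scaffold letter cannot together form an $ababa$, and no count showing that $\Omega(m\log\log m)$ distinct letters survive on $O(m)$ blocks. Since you yourself identify this as ``the part needing genuinely new work,'' the argument as written establishes nothing beyond what Nivasch already gives; your step (i) is not a delicate point to be checked later, it is the entire content of the lemma.

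The paper closes this gap differently, and more cheaply: it modifies level $1$ rather than level $2$. A fan is redefined to be the concatenation of two disjoint two-block palindromes, and $G_{1}(m)$ is built with one letter for every pair of non-adjacent fans, so each letter occurs $4$ times (one more than in Nivasch's base) and $G_{1}(m)$ has $\binom{m+1}{2}$ letters on $3m+3$ blocks with $S_{1}(m)=m+1$ special blocks. The usual double recursion producing $G_{d}(m)$ from copies of $G_{d}(m-1)$ and one copy of $G_{d-1}(S_{d}(m-1))$ --- inserting $aa$ next to the corresponding special blocks, which is exactly what adds two occurrences per level --- is then applied verbatim, so no new interface bookkeeping is needed beyond Nivasch's. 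The double logarithm at level $2$ is not designed by hand: it falls out of the recurrence $S_{2}(m)=S_{2}(m-1)(S_{2}(m-1)+1)$, which makes $S_{2}$ doubly exponential, whence $i=\Omega(\log\log M_{2}(i))$; the bounds $X_{d}(m)=M_{d}(m)/S_{d}(m)\le 2d+2$ and $V_{d}(m)=L_{d}(m)/M_{d}(m)\ge m/2$ convert this into $\Omega(m\log\log m)$ letters, and $S_{d}(m)\le A_{d}(m+2)$ gives $\Omega(\tfrac{1}{d}m\alpha_{d}(m))$ for $d\ge 3$. To salvage your route you would have to carry out the two-layer level-$2$ base in full, including the $ababa$ check via the last-occurrence block ordering; the paper's choice to push the modification down to level $1$, where the alternation check is a short palindrome count, is what makes the whole proof short.
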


For all $d, m \geq 1$, we inductively construct sequences $G_{d}(m)$ in which each letter appears $2d+2$ times and no two distinct letters make an alternation of length $5$. This proof uses a different definition of fan: fans will be the concatenation of two palindromes with no letters in common. Each palindrome consists of two blocks of equal length. 

The sequences $G_{1}(m)$ are the concatenation of $m+1$ fans. In each fan the second block of the first palindrome and the first block of the second palindrome make one block together since they are adjacent and have no letters in common. The first palindrome in the first fan and the second palindrome in the last fan are empty. 

There is a letter for every pair of fans and the letter is in both of those fans. The letters with last appearance in fan $i$ are in the first palindrome of fan $i$. They appear in fan $i$'s first palindrome's first block in reverse order of the fans in which they first appear. The letters with first appearance in fan $i$ are in the second palindrome of fan $i$. They appear in fan $i$'s second palindrome's first block in order of the fans in which they last appear. By construction $G_{1}(m)$ contains no alternation of length $5$.

For all $d \geq 1$ the sequence $G_{d}(1)$ consists of $2d+2$ copies of the letter $1$. The first and last copies of $1$ are both special blocks, and there are empty regular blocks before the first $1$ and after the last $1$. 

For $d, m \geq 1$ the blocks in $G_{d}(m)$ containing only first and last occurrences of letters are called special blocks. Let $S_{d}(m)$ be the number of special blocks in $G_{d}(m)$. Every letter has its first and last occurrence in a special block, and each special block in $G_{d}(m)$ has $m$ letters. 

Blocks that are not special are called regular. No regular block in $G_{d}(m)$ has special blocks on both sides, but every special block has regular blocks on both sides. 

The sequence $G_{d}(m)$ for $d, m \geq 2$ is constructed inductively from $G_{d}(m-1)$ and $G_{d-1}(S_{d}(m-1))$. Let $f = S_{d}(m-1)$ and $g = S_{d-1}(f)$. Make $g$ copies $X_{1}, \ldots, X_{g}$ of $G_{d}(m-1)$ and one copy $Y$ of $G_{d-1}(f)$, so that no copies of $G_{d}(m-1)$ have any letters in common with $Y$ or each other. 

Let $A_{i}$ be the $i^{th}$ special block of $Y$. If the $l^{th}$ element of $A_{i}$ is the first occurrence of the letter $a$, then insert $aa$ right after the $l^{th}$ special block of $X_{i}$. If the $l^{th}$ element of $A_{i}$ is the last occurrence of $a$, then insert $aa$ right before the $l^{th}$ special block of $X_{i}$. Replace $A_{i}$ in $Y$ by the modified $X_{i}$ for every $i$. The resulting sequence is $G_{d}(m)$. 

\begin{lem}
For all $d$ and $m$, $G_{d}(m)$ avoids $ababa$.
\end{lem}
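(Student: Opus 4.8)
The plan is to prove this by induction on $d$ and, for each fixed $d$, by induction on $m$. The base cases are $d=1$, where $G_1(m)$ avoids every alternation of length $5$ (hence $ababa$) by construction, and $m=1$, where $G_d(1)$ uses only one letter. So fix $d,m\geq 2$ and recall that $G_d(m)$ is built from the pairwise letter-disjoint copies $X_1,\dots,X_g$ of $G_d(m-1)$ and the single copy $Y$ of $G_{d-1}(f)$, where $f=S_d(m-1)$ and $g=S_{d-1}(f)$, by replacing the $i$-th special block $A_i$ of $Y$ with a copy $\tilde X_i$ of $X_i$ into which the entries of $A_i$ have been reinserted as adjacent pairs. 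Since a binary word contains an alternation of length $5$ exactly when it has at least $5$ maximal runs, it suffices to fix two distinct letters $a,b$ and show that the subsequence of $G_d(m)$ they induce has at most $4$ maximal runs. Each letter of $G_d(m)$ lies in exactly one $X_i$ or in $Y$, which gives four cases.

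I would first isolate two structural facts. Since $A_i$ has exactly $f$ entries and $X_i=G_d(m-1)$ has exactly $f=S_d(m-1)$ special blocks, the $l$-th entry of $A_i$ is reinserted next to the $l$-th special block of $X_i$; a brief check over whether each reinserted entry is a first or a last occurrence then shows that the letters of $Y$ appear inside $\tilde X_i$ in the same order as in $A_i$, each doubled. Consequently the restriction of $G_d(m)$ to the letters of $Y$ equals $Y$ with the first and last occurrence of every letter doubled in place. Second, because the special blocks of $Y$ hold only first and last occurrences, each letter $b$ of $Y$ has exactly two occurrences that migrate into the $\tilde X_i$'s — its first and its last — while its remaining $2d-2$ occurrences stay in regular blocks of $Y$.

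Now the cases. If $a,b$ lie in the same $X_i$, their induced subsequence in $G_d(m)$ coincides with the one in $X_i=G_d(m-1)$ (the inserted $Y$-letters are irrelevant to them), so we conclude by the induction on $m$. If $a,b$ lie in different copies $X_i,X_{i'}$, all their occurrences sit in the two disjoint segments $\tilde X_i,\tilde X_{i'}$, so the induced subsequence has at most $2$ runs. If $a,b$ both lie in $Y$, then by the first structural fact their induced subsequence is that of $Y=G_{d-1}(f)$ with at most four occurrences doubled in place, which does not increase the number of maximal runs, so we conclude by the induction on $d$. The remaining case, $a$ in some $X_i$ and $b$ in $Y$, is the crux: all occurrences of $a$ lie inside the single segment $\tilde X_i$, and by the second structural fact $\tilde X_i$ contains at most one doubled occurrence of $b$, so $\tilde X_i$ restricted to $\{a,b\}$ has at most $3$ runs; moreover, if that occurrence is $b$'s first then all other $b$'s lie after $\tilde X_i$, if it is $b$'s last then all other $b$'s lie before $\tilde X_i$, and if $\tilde X_i$ holds no $b$ then the $a$'s form a single run flanked by $b$'s — and in each subcase the full induced subsequence has at most $4$ runs. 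The main obstacle is exactly this last case: keeping straight which occurrences of a $Y$-letter move, into which $\tilde X_i$, and on which side of it the surviving occurrences fall; the other three cases collapse directly onto the two induction hypotheses.
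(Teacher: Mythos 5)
Your proof is correct and follows essentially the same route as the paper, which handles the base case $G_1(m)$ directly and then delegates the inductive step to Nivasch's argument that $Z_d(m)$ avoids $ababa$ --- namely the double induction with a four-way case analysis on whether each of the two letters lives in a copy $X_i$ of $G_d(m-1)$ or in $Y=G_{d-1}(f)$. You have simply written out in full the details (order preservation of the reinserted special-block entries, and the crux case of one letter in $\tilde X_i$ and one in $Y$) that the paper leaves to the citation.
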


\begin{proof}
Given that the alternations in $G_{1}(m)$ have length at most $4$ for all $m \geq 1$, then the rest of the proof is the same as the proof in \cite{Niv} that $Z_{d}(m)$ avoids $a b a b a$.
\end{proof}

Let $L_{d}(m)$ be the length of $G_{d}(m)$. Observe that $L_{d}(m) = (d+1)m S_{d}(m)$ since each letter in $G_{d}(m)$ occurs $2d+2$ times, twice in special blocks, and each special block has $m$ letters.

Define $N_{d}(m)$ as the number of distinct letters in $G_{d}(m)$ and $M_{d}(m)$ as the number of blocks in $G_{d}(m)$. Also let $X_{d}(m) = \frac{M_{d}(m)}{S_{d}(m)}$ and $V_{d}(m) = \frac{L_{d}(m)}{M_{d}(m)}$. We bound $X_{d}(m)$ and $V_{d}(m)$ as in \cite{Niv}. 

\begin{lem}\label{vd}
For all $m, d \geq 1$, $X_{d}(m) \leq 2d+2$ and $V_{d}(m) \geq \frac{m}{2}$.
\end{lem}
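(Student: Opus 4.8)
Since $L_d(m) = (d+1)\,m\,S_d(m)$ was established above, we have
\[
V_d(m)\,X_d(m) \;=\; \frac{L_d(m)}{M_d(m)}\cdot\frac{M_d(m)}{S_d(m)} \;=\; \frac{L_d(m)}{S_d(m)} \;=\; (d+1)m ,
\]
so the inequality $X_d(m)\le 2d+2$ is equivalent to $V_d(m) = \frac{(d+1)m}{X_d(m)} \ge \frac{(d+1)m}{2d+2} = \frac m2$, and it is enough to prove $M_d(m)\le (2d+2)\,S_d(m)$ for all $d,m\ge 1$. I would do this by induction, primarily on $d$ and secondarily on $m$, tracking $M_d(m)$ and $S_d(m)$ through the recursive description of $G_d(m)$.

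First I would read off two recurrences from the construction. With $f=S_d(m-1)$ and $g=S_{d-1}(f)$, the sequence $G_d(m)$ is built from $g$ copies of $G_d(m-1)$ and one copy $Y=G_{d-1}(f)$, and since each letter has its first and last occurrence in a special block one gets $N_d(m)=\tfrac12 m\,S_d(m)$; combining this with $N_d(m)=g\,N_d(m-1)+N_{d-1}(f)$ unwinds to $S_d(m)=fg=S_d(m-1)\cdot S_{d-1}\!\bigl(S_d(m-1)\bigr)$. For blocks, each of the $g$ copies contributes its $M_d(m-1)$ blocks except that its two empty boundary blocks are absorbed into $Y$ when the copy is spliced in place of a special block $A_i$, and the $aa$-insertions merge into the adjacent special and regular blocks (the special blocks of the copy growing from $m-1$ to $m$ letters) without creating new blocks; meanwhile $Y$ contributes all of its blocks except the $g$ special blocks it loses. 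This gives a recurrence of the shape $M_d(m)=g\,M_d(m-1)+M_{d-1}(f)-3g$, the additive constant in front of $g$ depending only on the boundary bookkeeping. For the base cases: $G_1(m)$ is the concatenation of $m+1$ fans, each becoming one special block flanked by two regular blocks, so $M_1(m)=3(m+1)$, $S_1(m)=m+1$; and $G_d(1)$ consists of $2d+2$ singleton blocks (two special, the rest regular) plus two empty boundary blocks, so $M_d(1)=2d+4$, $S_d(1)=2$. Both satisfy $M\le(2d+2)S$ comfortably.

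For the inductive step the naive argument fails: feeding the bare hypotheses $M_d(m-1)\le(2d+2)f$ and $M_{d-1}(f)\le 2d\,g$ into the block recurrence leaves $M_d(m)\le(2d+2)S_d(m)+(2d-3)g$, with an uncancelled term of order $g$ on the wrong side once $d\ge 2$. The fix is to carry a stronger invariant with explicit slack — e.g. $\sigma_d(m):=(2d+1)S_d(m)-M_d(m)\ge 2d-2$, which the $d=1$ base ($\sigma_1(m)=0$) and the $m=1$ base ($\sigma_d(1)=2d-2$) meet — and to check that the recurrence $\sigma_d(m)=g\bigl[\sigma_d(m-1)-(2d-4)\bigr]+\sigma_{d-1}(f)$ propagates it, since $g\ge 1$ makes the bracketed inner slack contribute at least $2$ and the outer term $\sigma_{d-1}(f)\ge 2d-4$ supplies the rest. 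This yields $X_d(m)\le 2d+1\le 2d+2$, and then $V_d(m)\ge\frac{(d+1)m}{2d+1}\ge\frac m2$ by the displayed identity. The main obstacle is precisely this two-level bookkeeping: extracting the correct block recurrence (exactly which blocks merge or are absorbed at the splice points and at the $aa$-insertions, including the empty boundary blocks) and calibrating the slack term so that it is both implied by the base cases and strong enough to close the recursion.
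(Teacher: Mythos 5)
Your framing is right — the identity $V_d(m)X_d(m)=(d+1)m$ correctly reduces everything to $M_d(m)\le(2d+2)S_d(m)$, and the base values $M_1(m)=3m+3$, $M_d(1)=2d+4$, $S_1(m)=m+1$, $S_d(1)=2$ and the recurrence $S_d(m)=S_d(m-1)S_{d-1}(S_d(m-1))$ all match the construction. But your block recurrence is wrong, and the error is exactly what makes your invariant close. In the construction each special block $A_i$ of $Y$ is replaced by a modified copy $X_i$; the $aa$-insertions are absorbed into the existing special and regular blocks of that copy, but the copies' boundary blocks are not merged into $Y$, so each copy contributes all $M_d(m-1)$ of its blocks and only the $g$ replaced special blocks of $Y$ vanish. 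The correct recurrence is $M_d(m)=g\,M_d(m-1)+M_{d-1}(f)-g$, not $-3g$. With it one computes $M_d(2)=(6d+3)2^{d-1}=(2d+1)S_d(2)$ exactly, so your slack $\sigma_d(2)=(2d+1)S_d(2)-M_d(2)$ equals $0$, not $2d-2$; and since the increments $X_d(m)-X_d(m-1)=\frac{X_{d-1}(f)-1}{f}$ are strictly positive, $X_d(m)>2d+1$ for all $m\ge 3$. Your claimed conclusion $X_d(m)\le 2d+1$ is therefore false for the actual construction — the gap between $2d+1$ and $2d+2$ in the lemma is not disposable slack but the entire content of the inductive step.

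The repair is not a recalibrated additive invariant (no per-step invariant of the form $\sigma_d(m)\ge c_d$ can survive the correct recurrence, since $\sigma_d(m)=g[\sigma_d(m-1)-(2d-2)]+\sigma_{d-1}(f)$ loses ground whenever $\sigma_d(m-1)$ is near its floor). Instead, divide the correct recurrence by $S_d(m)=fg$ to get $X_d(m)=X_d(m-1)+\frac{X_{d-1}(S_d(m-1))-1}{S_d(m-1)}$ and sum over $m$: assuming inductively $X_{d-1}(\cdot)\le 2d$, the total increase beyond $X_d(2)=2d+1$ is at most $(2d-1)\sum_{n\ge 2}S_d(n)^{-1}$, and since $S_d(n)\ge 2S_d(n-1)$ and $S_d(2)=3\cdot 2^{d-1}$ this sum is at most $\frac{1}{3\cdot 2^{d-2}}\le\frac{1}{2d-1}$, giving $X_d(m)\le 2d+2$. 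The super-exponential growth of $S_d$, not extra slack in the block count, is what absorbs the final $+1$.
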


\begin{proof}
By construction $S_{1}(m) = m+1$ for $m \geq 1$, $S_{d}(1) = 2$ for $d \geq 2$, and $S_{d}(m) = S_{d}(m-1)S_{d-1}(S_{d}(m-1))$ for $d, m \geq 2$. Furthermore $M_{1}(m) = 3m+3$, $M_{d}(1) = 2d+4$ for $d \geq 2$, and $M_{d}(m) = M_{d}(m-1)S_{d-1}(S_{d}(m-1))+M_{d-1}(S_{d}(m-1))-S_{d-1}(S_{d}(m-1))$ for $d, m \geq 2$.

Thus $S_{2}(m) = S_{2}(m-1)(S_{2}(m-1)+1) \leq 2 (S_{2}(m-1))^{2}$ and $S_{2}(1) = 2$. Since $S'(m) = 2^{2^{m}-1}$ satisfies the recurrence $S'(1) = 2$ and $S'(m+1) = 2 (S'(m))^{2}$, then $2^{2^{m-1}} \leq S_{2}(m) \leq 2^{2^{m}-1}$. For $d \geq 2$, $S_{d}(2) = 2S_{d-1}(2)$ and $S_{1}(2) = 3$. So $S_{d}(2) = 3 \times 2^{d-1}$.

For $d \geq 2$, $M_{d}(2) = (2d+3)(3 \times 2^{d-2})+M_{d-1}(2)$ and $M_{1}(2) = 9$. Hence $M_{d}(2) = (6d+3)2^{d-1}$.

Then $X_{1}(m) = 3$ for all $m$, $X_{d}(1) = d+2$ for all $d \geq 2$, and $X_{d}(2) = 2d+1$ for all $d \geq 2$. For $d, m \geq 2$, $X_{d}(m) = X_{d}(m-1)+\frac{X_{d-1}(S_{d}(m-1))-1}{S_{d}(m-1)}$.

We prove by induction on $d$ that $X_{d}(m) \leq 2d+2$ for all $m, d \geq 1$. Observe that the inequality holds for $X_{1}(m)$, $X_{d}(1)$, and $X_{d}(2)$ for all $m, d$.

Fix $d$ and suppose $X_{d-1}(m) \leq 2d$ for all $m$. Then $X_{d}(m) \leq X_{d}(m-1)+\frac{2d-1}{S_{d}(m-1)}$. Hence $X_{d}(m) \leq X_{d}(2) + (2d-1) \sum_{n = 2}^{\infty} S_{d}(n)^{-1} = 2d+1+(2d-1) \sum_{n = 2}^{\infty} S_{d}(n)^{-1}$.

Since $S_{d}(m) \geq 2 S_{d}(m-1)$ for all $d, m \geq 2$, then $\sum_{n = 2}^{\infty} S_{d}(n)^{-1} \leq 2S_{d}(2)^{-1} = \frac{1}{3 \times 2^{d-2}} \leq \frac{1}{2d-1}$ for all $d \geq 2$, so $X_{d}(m) \leq 2d+2$. Hence $V_{d}(m) = \frac{L_{d}(m)}{M_{d}(m)} = \frac{(d+1)m S_{d}(m)}{M_{d}(m)} \geq \frac{m}{2}$. 
\end{proof}

The following analysis demonstrates the lower bounds on $A_{5, 2d+2}(m)$ for each $d \geq 2$. If $d = 2$, let $m_{i} = M_{2}(i)$ and $n_{i} = N_{2}(i)$. Then $m_{i} = X_{2}(i)S_{2}(i) \leq 6 S_{2}(i) \leq 6 (2^{2^{i}-1}) \leq 2^{2^{i}+2}$ for $i \geq 1$. Then $i = \Omega(\log \log m_{i})$, so $n_{i} = \frac{L_{2}(i)}{6} = \frac{V_{2}(i) M_{2}(i)}{6} \geq \frac{i M_{2}(i)}{12} = \Omega(m_{i} \log \log m_{i})$.

We use interpolation to extend the bound from $m_{i}$ to $m$. Let $i$ and $t$ satisfy $m_{i} \leq m < m_{i+1}$ and $t = \lfloor \frac{m}{m_{i}} \rfloor$. Concatenate $t$ copies of $G_{2}(i)$ with no letters in common for a total of at least $\lfloor \frac{m}{m_{i}}\rfloor n_{i} = \Omega(m \log \log m)$ letters. Hence $A_{5, 6}(m) = \Omega(m \log \log m)$.

We prove $S_{3}(m) \leq A_{3}(2m)$ following the method of \cite{Niv}. Since $S_{3}(m) = S_{3}(m-1)S_{2}(S_{3}(m-1)) \leq S_{2}(S_{3}(m-1))^{2} \leq 2^{2^{S_{3}(m-1)+1}-2}$, then let $F(m) = 2^{2^{m+1}-2}$ and $G(m) = 2^{2^{m}}$. Then $2F(m) = 2^{2^{m+1}-1} \leq 2^{2^{2m}} = G(2m)$ for every $m \geq 0$. Thus $S_{3}(m) \leq F^{(m-1)}(S_{3}(1)) < 2F^{(m-1)}(S_{3}(1)) \leq G^{(m-1)}(2S_{3}(1)) = A_{3}(2m)$. 

Let $m_{i} = M_{3}(i)$ and $n_{i} = N_{3}(i)$. Therefore $m_{i} = X_{3}(i)S_{3}(i) \leq 8 S_{3}(i) \leq A_{3}(2i+2)$ for $i \geq 1$. So $i = \Omega(\alpha_{3}(m_{i}))$ and $n_{i} = N_{3}(i) = \frac{L_{3}(i)}{8} = \frac{V_{3}(i)M_{3}(i)}{8} \geq \frac{i M_{3}(i)}{16} = \Omega(m_{i} \alpha_{3}(m_{i}))$. Then $A_{5, 8}(m) = \Omega(m \alpha_{3}(m))$ by interpolation.

For each $d\geq 4$ we prove $S_{d}(m) \leq A_{d}(m+2)$ by induction on $d$. Since $S_{4}(m) = S_{4}(m-1)S_{3}(S_{4}(m-1)) \leq S_{3}(S_{4}(m-1))^{2}$, then let $F(m) = S_{3}(m)^{2}$. Since $4 F(m) \leq A_{3}(4m)$ and $A_{4}(3) > 4 S_{4}(1)$, then $S_{4}(m) \leq F^{(m-1)}(S_{4}(1)) < 4 F^{(m-1)}(S_{4}(1)) \leq A_{3}^{(m-1)}(4 S_{4}(1)) < A_{4}(m+2)$. 

Fix $d > 4$ and suppose $S_{d-1}(m) \leq A_{d-1}(m+2)$. Define $F(m) = S_{d-1}(m)^{2}$. Since $4 F(m) \leq A_{d-1}(4m)$ and $A_{d}(3) > 4 S_{d}(1)$, then $S_{d}(m) \leq F^{(m-1)}(S_{d}(1)) < 4 F^{(m-1)}(S_{d}(1)) \leq A_{d-1}^{(m-1)}(4 S_{d}(1)) < A_{d}(m+2)$. 

Fix $d \geq 4$. Let $m_{i} = M_{d}(i)$ and $n_{i} = N_{d}(i)$. Then $m_{i} = X_{d}(i)S_{d}(i) \leq (2d+2)S_{d}(i) \leq (2d+2)A_{d}(i+2) \leq A_{d}(i+3)$. Then $i \geq \alpha_{d}(m_{i})-3$, so $n_{i} = \frac{L_{d}(i)}{2d+2} = \frac{V_{d}(i) M_{d}(i)}{2d+2} \geq \frac{i M_{d}(i)}{4d+4} = \Omega(\frac{1}{d} m_{i} \alpha_{d}(m_{i}))$. By interpolation $A_{5, 2d+2}(m) = \Omega(\frac{1}{d} m \alpha_{d}(m))$ for $d \geq 4$. 

\begin{cor}
If $r \geq 2$, then $F_{r, 4, 6}(m) = \eta_{r, 3, 5}(m) = \Omega(m \log \log m)$ and $F_{r, 4, 2d+2}(m) = \eta_{r, 3, 2d+1}(m) = \Omega(\frac{1}{d} m \alpha_{d}(m))$ for $d \geq 3$. 
\end{cor}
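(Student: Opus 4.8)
The plan is to derive the corollary by combining the formation--chain identity of Lemmas~\ref{equalupper} and~\ref{equallower} with the elementary inequality $A_{s+1,k}(m) \le F_{r,s,k}(m)$ for $r \ge 2$ --- which holds because every $(r,s)$-formation contains an alternation of length $s+1$ --- and the lower bounds $A_{5,6}(m) = \Omega(m\log\log m)$ and $A_{5,2d+2}(m) = \Omega(\frac{1}{d} m\alpha_d(m))$, $d \ge 3$, established earlier in this section. No new construction is needed; the proof is essentially a substitution of indices.

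First I would instantiate $F_{r,s+1,k+1}(m+1) = \eta_{r,s,k}(m)$ at $(s,k) = (3,5)$ and at $(s,k) = (3,2d+1)$, giving $F_{r,4,6}(m+1) = \eta_{r,3,5}(m)$ and $F_{r,4,2d+2}(m+1) = \eta_{r,3,2d+1}(m)$ for every $r \ge 1$ and every $m \ge 5$ (resp. $m \ge 2d+1$), where the hypothesis $1 \le s \le k \le m$ is satisfied; this yields the asserted equalities. Then, applying $A_{s+1,k}(m) \le F_{r,s,k}(m)$ at $(s,k) = (4,6)$ and at $(s,k) = (4,2d+2)$ gives, for $r \ge 2$, the inequalities $F_{r,4,6}(m) \ge A_{5,6}(m)$ and $F_{r,4,2d+2}(m) \ge A_{5,2d+2}(m)$.

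Finally I would substitute the known lower bounds: $A_{5,6}(m) = \Omega(m\log\log m)$ gives $F_{r,4,6}(m) = \Omega(m\log\log m)$, and since $\eta_{r,3,5}(m) = F_{r,4,6}(m+1) \ge A_{5,6}(m+1) = \Omega(m\log\log m)$ the same bound holds for $\eta_{r,3,5}(m)$; likewise $A_{5,2d+2}(m) = \Omega(\frac{1}{d} m\alpha_d(m))$ for $d \ge 3$ transfers to $F_{r,4,2d+2}(m)$ and, via the identity together with $\alpha_d(m+1) = \Theta(\alpha_d(m))$, to $\eta_{r,3,2d+1}(m)$. There is no genuine obstacle here: the corollary is a bookkeeping consequence of the machinery already assembled, and the only points meriting care are checking the ranges in the hypothesis $1 \le s \le k \le m$ and noting that the off-by-one shift in the block parameter is absorbed by the asymptotic statements (and, if one insists on the bound stated at $m$ rather than $m+1$, by the trivial monotonicity $F_{r,4,k}(m+1) \ge F_{r,4,k}(m)$ obtained by appending an empty block).
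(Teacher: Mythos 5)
Your proposal is correct and is exactly the argument the paper intends (the corollary is left as an immediate consequence): combine $F_{r,s+1,k+1}(m+1)=\eta_{r,s,k}(m)$ from Lemmas~\ref{equalupper} and~\ref{equallower} with $A_{5,k}(m)\le F_{r,4,k}(m)$ and the lower bounds on $A_{5,6}(m)$ and $A_{5,2d+2}(m)$ just established. You are in fact slightly more careful than the paper, since you note that the identity really gives $\eta_{r,3,k}(m)=F_{r,4,k+1}(m+1)$ and that the off-by-one shift is harmless asymptotically.
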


\section{Acknowledgments}
This research was supported by an NSF graduate research fellowship. The author thanks Peter Shor for helpful comments on this paper and for improving the bounds on $S_{2}(m)$ in Lemma~\ref{vd}.

\end{document}